\renewcommand*{\backrefalt}[4]{%
	\hypersetup{linkcolor=gray}%
	\color{gray}{%
		[%
		\ifcase #1 %
		No citations%
		\or
		Cited on p. #2%
		\else
		Cited on pp. #2%
		\fi
		]%
	}
}
\newtheorem{newremark}{Remark}[section]
\newenvironment{remark}
{\begin{newremark}\rm}
	{\end{newremark}}
\newtheorem{theorem}{Theorem}
\newtheorem{example}[theorem]{Example}
\newtheorem{lemma}[theorem]{Lemma}
\newcommand{\ee}{{\rm e}}
\newcommand{\R}{\mathbb{R}}
\newcommand{\oC}{\mathbb{C}}
\newcommand{\cE}{ {\cal E} }
\newcommand{\vz}{\vec{z}}
\newcommand{\vu}{\vec{u}}
\newcommand{\vy}{\vec{y}}
\newcommand{\vx}{\vec{x}}
\newcommand{\vp}{\vec{p}}
\newcommand{\vg}{\vec{g}}
\newcommand{\vertiii}[1]{{\left\vert\kern-0.25ex\left\vert\kern-0.25ex\left\vert #1 
		\right\vert\kern-0.25ex\right\vert\kern-0.25ex\right\vert}}
\begin{document}
	\selectlanguage{english}	
	\title{Numerical method for abstract Cauchy problem with nonlinear nonlocal condition}
	
	\author{%
		V.~Makarov\,\orcidlink{0000-0002-4883-6574}
		\thanks{Institute of mathematics, NAS of Ukraine,
			({\tt makarov@imath.kiev.ua}).} \and 
		D.~Sytnyk\,\orcidlink{0000-0003-3065-4921}\thanks{Institute of mathematics, NAS of Ukraine,
			({\tt sytnik@imath.kiev.ua}).} \and
		V.~Vasylyk\,\orcidlink{0000-0002-4235-9674}\thanks{Institute of mathematics, NAS of Ukraine,
			({\tt vasylyk@imath.kiev.ua}).}	}
	
	\date{\null}
	\maketitle
	
	{\small
		{\bf Abstract.} Problem for the first order differential equation with an unbounded operator coefficient in Banach space and nonlinear nonlocal condition is considered. A numerical method is proposed and justified for the solution of this problem under assumptions that the mentioned operator coefficient $A$ is strongly positive and some existence and uniqueness conditions are fulfilled. The method is based on the reduction of the given problem to an abstract Hammerstein equation. The later one is discretized by collocation and then solved via the fixed--point iteration method. Each iteration of the method involves  Sinc-based numerical evaluation of the operator exponential represented by a Dunford-Cauchy integral along hyperbola enveloping the spectrum of $A$. 
	}
	
	\section{Introduction}\label{sec:CC-NP-intro}
	The paper presents a numerical method for the 
	nonlocal initial value problem  
	\begin{equation}\label{eq:CP-NN}
	\begin{split}
	& \frac{\partial u(t)}{\partial t}+Au(t)=0, \quad t \in (-1,1),\\
	&u(-1) - g\left (u(\cdot)\right )=u_0,
	\end{split}
	\end{equation}
	where $A$ is a linear sectorial operator with the dense domain $D(A)$ in Banach space $X$ and $g: C\left ([-1,1]; X\right ) \rightarrow X$ is a given operator function. 
	Recall that the sectorialness of $A$ means that its spectrum  lies inside a sectorial domain $\Sigma \in \oC$, 
	\begin{equation}\label{spectrA}
	\Sigma\left (\rho, \varphi \right ) = \left\{ z=\rho+r \ee^{i\theta}:\ r \in  [0,\infty), \, \rho \in \R_{+}, \, \left|\theta\right|< \varphi
	\right\},
	\end{equation}
	and the resolvent $R_A(z) = \left (zI - A \right )^{-1}$ of $A$ satisfies the inequality
	\begin{equation}\label{estrez}
	\lVert R_A(z)\rVert \leq \frac{M}{1+\left |z\right|},
	\end{equation}
	on the boundary of $\Sigma$ and outside it.
	A unique pair of parameters $\left (\rho_0, \varphi_0\right )$ such that 
	$\Sigma\left (\rho_0, \varphi_0\right ) \subseteq \Sigma\left (\rho, \varphi \right )$, for any other admissible pairs $(\rho, \varphi )$,
	%
	is called spectral parameters of $A$. 
	
	The aforementioned nonlocal problem \eqref{eq:CP-NN} is well studied from the theoretical point of view.  Initial investigations into the existence of solution to \eqref{eq:CP-NN} is given in the works of Byszewski \cite{Byszewski1991a, Byszewski1992} for the case when $g$ is a function of the finite number of $u(t_i)$, $i=\overline{1,n}$. 
	Further developments and generalizations can be found in \cite{Byszewski1992a, NtouyasTsamatos1997, AizicoviciLee2005, XiaoLiang2005, nonloc_exMVS2014} (see also the review in \cite{Ntouyas2006461}).
	In this work we will use the existence conditions obtained in \cite{FanDongLi2006}.
	
	Available works on numerical methods for the nonlocal problems of type \eqref{eq:CP-NN} usually deals with the particular realizations of operator $A$ and nonlocal condition specific to some application area. Pioneering work of Bitsadze and Samarskii \cite{NonlocalBitsadze1969}, inspired by the plasma research, deals with two-point nonlocal problem for elliptic partial differential operators.  Similar two and three point nonlocal conditions appear in the works of Gordeziani \cite{nonloc_BS_Gordeziani1970, Gordeziani1981, Gordeziani1984} in applications two the dynamics of the thin-walled structures. In \cite{nonlocal_time_numerVabishchevich1982} Vabishchevich investigated the application of two-point nonlocal problems to the inverse problems of heat conduction (see also \cite{AshGer,nonloc_int_time_Ashyralyev2014}). Galerkin methods for multipoint nonlocal problems related to the diffusion processes  were developed in the works of Canon with co-authors \cite{Cannon1986, Cannon1987}.
	
	Majority of the existing numerical methods are based on the finite difference or finite element approximations to the given differential equation.
	More novel numerical approaches presented in \cite{Gavrilyuk2010,vasNL_Ell,VasIntEll} are based on the direct approximation of the solution  operator employing the Dunford-Cauchy formula and quadrature specifically adjusted to the spectral parameters of $A$.  
	The later technique permits one to obtain natively parallelizable  numerical methods with the accuracy automatically adjustable to the smoothness of initial data (methods without accuracy saturation).
	These two numerical properties are essential for the modern problem driven scientific simulations using the state-of-the-art multi-core computational architectures. Hence, we set the development of such a method for \eqref{eq:CP-NN} as the main goal of current work.
	
	Section \ref{sec:CC-NP-int_repr} serves a preparatory purpose. In this section the original problem is transformed to a more general  nonlinear representation. 
	In section \ref{sec:CP-NN-disctretization} we show how to discretize the obtained nonlinear Hammerstein equation. 
	The discretized problem is then studied in section \ref{sec:CC-NN-sol_disc_sys}, where we establish conditions on the existence of its solution with help of the Banach fixed point theorem. The accuracy estimate for the error of fixed point iteration method is obtained therein. 
	Final error estimate of the method is derived in section \ref{sec:CP-NN-final_error}. 
	The remaining part of this section is devoted to a numerical example, which demonstrates the method's effectiveness.
	Concluding remarks and possible extension are given in section  \ref{sec:CC-NN-conclusions}.

	\section{Alternative representation of given problem}\label{sec:CC-NP-int_repr}
	The Hille-Yosida theorem ensures the existence of strongly continuous semigroup $\ee^{-At}$,  if $A$ is a sectorial operator with the dense domain $D(A) \subseteq X$. 
	Furthermore it is well known from the classical semigroup theory \cite{bPazy1983} that for such $A$ the general solution to the abstract differential equation from \eqref{eq:CP-NN} can be formally written as 
	\[
	u(t) = T(A,t)u(-1) \equiv \ee^{-A(t+1)}u(-1), \quad \forall  t \in [-1,\,1], \ u(-1) \in X. 
	\]
	When $u(-1) \notin D(A)$, the action $T(A,t)u(-1)$ should be understood in a sense of the limit. 
	Next we incorporate the nonlocal condition given in \eqref{eq:CP-NN} by substituting the expression for $u(-1)$ therefrom. It results in the general nonlinear problem 
	\begin{equation}\label{eq:CP-NN-ham_eq}
	u(t)=T(A,t)\left[u_0 + g\left (u(\cdot)\right )\right].
	\end{equation}
	Equation \eqref{eq:CP-NN-ham_eq} 
	is commonly known as the abstract Hammerstein equation \cite{NHE_Hess1971, Zeidler1990}. 
	Unlike \eqref{eq:CP-NN} this equation is valid for any $u_0 \in X$, and  becomes equivalent to \eqref{eq:CP-NN}, when $u(t)$ is differentiable and $u_0 \in D(A)$. 
	The function satisfying \eqref{eq:CP-NN-ham_eq} is called a mild solution of \eqref{eq:CP-NN}, while the original solution of \eqref{eq:CP-NN} is called strong. 
	Note that, by derivation of \eqref{eq:CP-NN-ham_eq}, both equations from \eqref{eq:CP-NN} are incorporated into one formula, ready-made for the use of fixed point iteration.
	For that reason, either Banach \cite{Byszewski1991a} or Schauder \cite{NtouyasTsamatos1997} fixed point theorem can be directly applied to \eqref{eq:CP-NN-ham_eq} resulting in the existence conditions for the mild solution 
	(see \cite{NtouyasTsamatos1997, Ntouyas2006461} for the detailed reviews of applicable techniques). 
	
	At first, it seems that equation \eqref{eq:CP-NN-ham_eq} is not so valuable from the computational point of view, since it contains operator function  $T(A,t)$. Its numerical evaluation is a non-trivial computationally involving task even in the case when $A$ is matrix  \cite{MolerLoan2003}. 

	In reality stable and efficient approximation of $T(A,t)$ is possible for $t \in [-1,\,1]$, as long as $A$ is a linear sectorial operator with $\varphi < \frac{\pi}{2}$ \cite{GavrilyukMakarovVasylyk2011}. 
	Next we will summarize how to build the exponentially convergent approximation of $T(A,t)v$ for $v \in D(A^\alpha)$, $\alpha >0$. 
	The action of operator exponential $T(A,t)$ on $v$ admits the following integral representation  \cite{bClement1987, bPazy1983}
	\begin{equation}\label{eqDunfordCauchyRepr}
	\ee^{-A(t+1)}v=\frac{1}{2\pi i}\int_{\Gamma}\ee^{-z(t+1)}R_A(z)vdz,
	\end{equation}
	here $R_A(z)=(z I-A)^{-1}$ is the resolvent of $A$,  and $\Gamma \in \oC\setminus\Sigma $ stands for the integration contour, positively oriented  with respect to the spectrum $\Sigma$ of $A$. According to \cite{GavrilyukMakarovVasylyk2011} the integration contour for the Dunford-Cauchy integral in \eqref{eqDunfordCauchyRepr} needs to be chosen as 
	\[
	\Gamma=\left\lbrace z(s)=a_I\cosh(s)-ib_I\sinh(s):\, s\in (-\infty,\infty) \right\rbrace. 
	\]
	The unknown values of contour parameters $a_I$, $b_I$ are to be determined in such a way that the integrand can be analytically extended into the strip $D_d \in \oC$ 
	$$
	D_d = \left\{\xi=x+i y\, \middle|\, x \in \R, |y| < \frac{d}{2} \right\},
	$$
	and remains bounded there.
	For a fixed $A$ the strip width parameter $0< d <\frac{\pi}{2}$ is responsible for the accuracy of the sinc-quadrature used below to approximate \eqref{eqDunfordCauchyRepr}.
	The mentioned Sinc-quadrature provides exponentially convergent approximation of $T(A,t),\, t>-1$ and attains its fastest convergence rate,  when 
	\[
	d=\frac{\pi}{2} -\varphi.
	\]
	For such $d$ and $(\rho_0,\varphi_0)$ the values of $a_I$, $b_I$ are specified by the formulas
	
	\begin{equation}\label{eq:CP-NN_aI_bI}
	a_I= \rho_0 \frac{\cos{\left(\frac{d}{2} +\varphi_0\right)}}{\cos{\varphi_0}} , \quad b_I= \rho_0 \frac{\sin{\left(\frac{d}{2} +\varphi_0\right)}}{\cos{\varphi_0}}.
	\end{equation}
	In addition to the performed parametrization of $\Gamma$ we make the specific resolvent correction to \eqref{eqDunfordCauchyRepr} by putting
	\[
	R_{A,1}(z)=R_A(z)-\frac{1}{z}I
	\]
	in place of $R_{A}(z)$. 
	Such correction compensates the poor decay of $T(A,-1)$ at infinity, and allows to guaranty the exponential convergence rate of the mentioned Sinc-quadrature \cite{GavrilyukMakarovVasylyk2011}. 
	After we conduct the described manipulations on \eqref{eq:CP-NN-ham_eq} it will take the form 
	\begin{equation}\label{eq:CP-NN-intrivDC}
	\begin{split}
	u(t)= &\int\limits_{-\infty}^{\infty} e^{-z(\xi)\left(t+1\right)} z'(\xi) R_{A,1}\left (z\left (\xi\right )\right )\left[u_0 + g\left (u(\cdot)\right )\right] d\xi,\\
	z^\prime(\zeta)=& a_I \sinh{\zeta}-i b_I \cosh{\zeta}.
	\end{split}
	\end{equation}
	\begin{remark}
		The reader might have noted that the performed correction can only be justified if the spectral shift $\rho_0$ is positive. Indeed for the negative $\rho_0$ both the correction and the definition of $a_I, b_I$ have to be modified. On the other hand, one might always make spectral shift to be greater than zero by a simple transformation 
		\[
		v(t) = e^{\rho_1 t} u(t), \quad \rho_1>0.
		\]  
	\end{remark}
	
	\section{Discretization}\label{sec:CP-NN-disctretization}
	The next step toward the fully discretized analogue of \eqref{eq:CP-NN-ham_eq}, relies upon the approximation of the integral in \eqref{eq:CP-NN-intrivDC}.
	In this section we utilize the quadrature based approximation developed in authors' earlier works (see \cite{GavrilyukMakarovVasylyk2011} and the references therein).  The operator exponential $\ee^{-A(t+1)}v$ is approximated by $T_N(A,t)v$, 
	\begin{equation}\label{eq:op_exp_approx}
	T_N(A,t)v  = \frac{h}{2\pi i}\sum_{p=-N}^N\ee^{-z(ph)(t+1)}z^{\prime}(ph)R_{A,1}(z(ph))v.	
	\end{equation}
	Expression on the right of \eqref{eq:op_exp_approx} is obtained from the parametrized version of Dunford-Cauchy integral \eqref{eq:CP-NN-intrivDC}, with help of the Sinc-quadrature (trapezoid) formula. 
	Before we state the result concerning the accuracy of the above approximation, let us highlight that all the summands in \eqref{eq:op_exp_approx} are mutually independent.  
	The evaluation of every summand involves the calculation of the time-dependent scalar part and the evaluation of resolvent part, free of $t$.
	Computationally it means that the resolvent evaluations $R_{A,1}(z(ph))v$, $p =\overline{-N,N}$ can be performed in parallel and the results are stored. Once that is done all the evaluation  of $T_N(A,t)v$ -- for as many $t \in [-1,1]$ as needed -- can be achieved at the fraction of cost, spent for the resolvent evaluations. This is especially true if the evaluation of $R_{A,1}(z(ph))v$  predominates 
	the calculation of the scalar part in terms of computational complexity.   
	Some additional savings of computational resources are possible if the operator $A$ is real-valued \cite{Gavrilyuk2010}. 
	The accuracy of the proposed approximation is characterized by the following theorem  \cite[p. 34]{GavrilyukMakarovVasylyk2011}.
	
	\begin{theorem}\label{thm:ope_exp_est}
		Assume that $A$ is a linear sectorial operator with the densely defined domain and $v\in D(A^\alpha)$. Let 
		\begin{equation*}
		h=\sqrt{\frac{ \pi d}{\alpha(N+1)}},
		\end{equation*}		
		then the error $\eta_N(t)v \equiv \|T(A,t)v - T_N(A,t)v\|$ 
		satisfies the estimate
		\begin{equation}\label{PohEks}
		\left\| \eta_N(t)v \right\|=\left\| \ee^{-A(t+1)}v- T_N(A,t) v \right\| \le \frac{c\, \ee^{-\sqrt{\pi d \alpha(N+1)}}}{\alpha}\|A^{\alpha}v\|,
		\end{equation}
		with some positive constant $c$ independent on $A$, $v$, $\alpha$ and $t$. 
	\end{theorem}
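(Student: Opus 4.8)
The plan is to follow the classical Sinc-quadrature error analysis, treating the integrand of \eqref{eq:CP-NN-intrivDC} as a Banach-space valued function
\[
F(\xi) = \ee^{-z(\xi)(t+1)} z'(\xi) R_{A,1}(z(\xi)) v,
\]
and splitting the total error $\eta_N(t)v$ into the Sinc discretization error (the trapezoidal rule on the whole real line against the exact integral) and the truncation error (from restricting the infinite sum to $|p|\le N$). First I would record the algebraic identity $R_{A,1}(z) = R_A(z) - \tfrac1z I = \tfrac1z A R_A(z)$, so that on $v\in D(A^\alpha)$ one has $R_{A,1}(z)v = \tfrac1z A^{1-\alpha} R_A(z) A^\alpha v$; combined with the sectorial bound $\|A^{1-\alpha}R_A(z)\| \le C|z|^{-\alpha}$ (a consequence of \eqref{estrez}) this yields the smoothing estimate $\|R_{A,1}(z)v\| \le C|z|^{-1-\alpha}\|A^\alpha v\|$, which is the source of decay that makes the correction $-\tfrac1z I$ necessary at $t=-1$.

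Next I would verify analyticity and boundedness of $F$ in the strip $D_d$ with $d = \tfrac\pi2 - \varphi$. The key geometric fact is that the parametrization $z(\xi) = a_I\cosh\xi - ib_I\sinh\xi$, with $a_I, b_I$ fixed by \eqref{eq:CP-NN_aI_bI}, maps the whole strip $\{|\operatorname{Im}\xi| < d/2\}$ into the resolvent set $\oC\setminus\Sigma(\rho_0,\varphi_0)$, so $R_{A,1}(z(\xi))$ is holomorphic there; this is precisely the reason for the specific choice of the contour parameters and of $d$. On the boundary lines $\operatorname{Im}\xi = \pm d/2$ one checks that $\operatorname{Re} z(\xi) \ge 0$ (so the factor $\ee^{-z(\xi)(t+1)}$ stays bounded by $1$ uniformly for $t\in[-1,1]$), while $|z'(\xi)| \sim |z(\xi)| \sim \ee^{|\operatorname{Re}\xi|}$; together with the smoothing estimate this gives $\|F(x\pm i\tfrac d2)\| \le C\,\ee^{-\alpha|x|}\|A^\alpha v\|$ and hence membership in the Hardy class $\mathbf{H}^1(D_d)$ with norm controlled by $\tfrac{C}{\alpha}\|A^\alpha v\|$.

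With these two properties in hand I would invoke the standard Sinc-quadrature theorem for $\mathbf{H}^1(D_d)$-functions to bound the discretization error by $C\,\ee^{-\pi d/h}\|A^\alpha v\|$, and estimate the truncation tail directly: since the summands decay like $\ee^{-\alpha|p|h}$, summing the geometric tail and multiplying by $h$ gives a bound of order $\tfrac{C}{\alpha}\ee^{-\alpha N h}\|A^\alpha v\|$, which is where the $1/\alpha$ prefactor in \eqref{PohEks} originates. Finally I would optimize the step by balancing the two exponents, $\pi d/h = \alpha(N+1)h$, which produces exactly $h = \sqrt{\pi d/(\alpha(N+1))}$ and the common rate $\ee^{-\sqrt{\pi d\,\alpha(N+1)}}$, completing the estimate.

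I expect the main obstacle to be the second step: rigorously establishing that the fattened hyperbola $z(D_d)$ stays clear of the sector $\Sigma(\rho_0,\varphi_0)$ while simultaneously controlling $\operatorname{Re} z(\xi)$ on the strip boundary, so that the bound on $\|F\|$ holds uniformly in $t\in[-1,1]$ --- including the delicate endpoint $t=-1$, where the exponential provides no decay and all decay must come from the resolvent correction. Tracking the $\alpha$-dependence through the smoothing estimate and the tail sum carefully enough to obtain the precise $1/\alpha$ factor, rather than a cruder constant, will also require some attention.
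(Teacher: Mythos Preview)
The paper does not give its own proof of this theorem: it is quoted verbatim from \cite[p.~34]{GavrilyukMakarovVasylyk2011} and used as a black box. So there is nothing in the present paper to compare your argument against directly.

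That said, your outline is precisely the standard route by which such estimates are established in the cited monograph: the corrected-resolvent identity $R_{A,1}(z)=\tfrac1z A R_A(z)$ combined with the sectorial moment bound to obtain $\|R_{A,1}(z)v\|\le C|z|^{-1-\alpha}\|A^\alpha v\|$; verification that $z(\cdot)$ maps the strip $D_d$ into the resolvent set while keeping $\operatorname{Re} z\ge 0$ on the strip boundary, so that the integrand lies in the Hardy class $\mathbf{H}^1(D_d)$ uniformly in $t\in[-1,1]$; the generic Sinc-quadrature bound $\ee^{-\pi d/h}$ for the discretization error; the geometric tail bound $\tfrac{C}{\alpha}\ee^{-\alpha N h}$ for truncation; and balancing $\pi d/h=\alpha(N+1)h$ to get the stated $h$ and rate. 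Your identification of the delicate point --- that at $t=-1$ the factor $\ee^{-z(t+1)}$ gives no decay, so the correction $-\tfrac1z I$ is essential --- is exactly the reason the authors introduce $R_{A,1}$ before stating the theorem. Nothing in your plan is off; you would simply be reproducing the argument the paper chose to cite rather than repeat.
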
 
	Estimate \eqref{PohEks} demonstrates that the approximant $T_N(A,t)v$ meets all the requirements formulated in section \ref{sec:CC-NP-int_repr}. 
	Bearing that in mind we proceed to the collocation of \eqref{eq:CP-NN-ham_eq}. 
	More precisely we will apply the polynomial collocation method to the modified version thereof 
	\begin{equation}\label{eq:CP-NN-ham_eq_approx}
	u(t) = T_N(A,t) \left[u_0 + g\left (u(\cdot)\right )\right].
	\end{equation}
	Let us introduce the Chebyshev-Gauss-Lobatto (CGL) nodes $t_j=-\cos\left( \frac{\pi j}{n}\right)$, $j=\overline{0,n}$. For a given vector $\vy=(y_0,\dots,y_n)$ we define  the  modified Hermite-Fej\'er polynomial \cite{Smith2006} 
	\[
	K_{2n-1}(t,\vy) = \sum_{i=0}^{n}B_{i,2n-1}(t)y_i,
	\]
	of the degree $2n-1$.
	For each $i=0,1,\dots n$,  $B_{i,2n-1}$ is the unique polynomial such that  $B_{i,2n-1}(t_j) = \delta_{i,j}$, for $j=\overline{0,n}$, and $B'_{i,2n-1}(t_j) = 0$, for  $j=\overline{1,n-1}$
	\[
	\begin{split}
	B_{0,2n-1}(t) = \frac{1+t}{2} P_{n-1}^2 (t), \quad B_{0,2n-1}(t) = \frac{1-t}{2} P_{n-1}^2 (t)\\
	B_{i,2n-1}(t) = \frac{\left (1-t^2\right )\left (1 + t t_i - 2t_i^2
		\right )}{\left (n-1\right )^2\left (t - t_i\right )^2\left (1 - t_i^2\right )} P_{n-1}^2 (t), \quad i = \overline{1,n-1},
	\end{split}
	\]
	where $P_n(t)$ is Chebyshev polynomial of the first kind.
	Now, we put the polynomial $K_{2n-1}(t, \vy)$ in place of $u(t)$ in \eqref{eq:CP-NN-ham_eq_approx} and collocate the received equation at the sequence of interpolation points. It leads us to the system of nonlinear equations
	\begin{equation}\label{eq:CP-NN-disc_sys}
	y_i=T_N(A,t_i)u_0+ T_N(A,t_i)g\left (K_{2n-1}\left (\cdot, \vy\right )\right ), \quad i =\overline{0,n},
	\end{equation}
	with respect to the unknowns $y_i$. Similarly to \eqref{eq:CP-NN-ham_eq} this system, can be directly used to find the approximation to $u(t)$ on the chosen grid, since $y_i=u(t_i)$ is clearly the solution. In the following section we are going to theoretically justify the iterative solution method based upon \eqref{eq:CP-NN-disc_sys}.
	
	
	\section{Solution of discretized problem}\label{sec:CC-NN-sol_disc_sys}
	In order to prove the existence of solution to \eqref{eq:CP-NN-disc_sys} we recast this system in a vector-matrix form 
	
	\begin{equation}\label{eq:CP-NN-ham_eq_vec_it}
	\vy=\vg\left (\vy \right ) +\vp,
	\end{equation}
	where 
	\begin{equation*}
	\begin{array}{rl}
	\vg(\vy)= &\left (T_N(A,t_0)g(K_{2n-1}(\cdot,\vy)),\dots , T_N\left (A,t_n\right ) g\left (K_{2n-1}\left (\cdot,\vy\right )\right )\right )^T,  \\ 
	\vp= &(T_N(A,t_0)u_0,\dots , T_N(A,t_n)u_0)^T. 
	\end{array}
	\end{equation*}
	For the existence of the solution it is sufficient to show that a recurrence sequence
	\begin{equation}\label{eq:kapprox}
	\vy^{(k)}=\vg\left (\vy^{(k-1)}\right ) +\vp, \quad \vy^{(0)}=\vp,
	\end{equation}
	is convergent in the vector space $X^n = X \times X \times \dots X$. 		
	For any $ \vx \in X^n$ let us introduce  a norm 
	\[
	\vertiii{\vx}=\max_{0\le j \le n}\left\| x_j \right\|.
	\]
	Regarding the function $g$ we require it to satisfy the following Lipschitz-like condition for any $u,v \in C\left ([-1,1]; X\right )$
	\begin{equation}\label{eq:CP-NN-lip_cond}
	\left\| A^{\alpha}\left( g\left (u\right )-g\left (v\right ) \right) \right\| \le L \max\limits_{t \in [-1,1]}\left\| u(t)-v(t)\right\|,
	\end{equation}
	with some positive constants $\alpha$ and $L<\infty$.
	Apart from \eqref{eq:CP-NN-lip_cond} we will use the estimate 
	\begin{equation}\label{eq:approx_est}
	\left\| T_N(A,t_i)A^{-\alpha} \right\|\le \frac{c}{\alpha},
	\end{equation}
	which is a mere consequence of (3.278) from  \cite{GavrilyukMakarovVasylyk2011}. 
	\begin{theorem}\label{thm:CP-NN-sol_ex_ham_disc}
		Assume that $A:X\rightarrow X$ is a linear operator satisfying the condition of Theorem \ref{thm:ope_exp_est} 
		and $g: C\left ([-1,1]; X\right ) \rightarrow X$ is an operator function satisfying \eqref{eq:CP-NN-lip_cond}.  
		If there exist such $L,c,\alpha>0$ from \eqref{eq:CP-NN-lip_cond},\eqref{eq:approx_est}, that 
		\begin{equation}\label{UmZb}
		q\equiv\frac{3Lc}{\alpha}<1,
		\end{equation}
		then equation \eqref{eq:CP-NN-ham_eq} has a unique solution $\vy^{(\infty)}=\lim\limits_{k\rightarrow \infty} \vy^{(k)}$. Moreover 
		\begin{equation}\label{eq:est_it_sol}
		\vertiii{\vy^{(\infty)}}\le \vertiii{\vp} + \vertiii{\vg\left (\vp\right )}\frac{1}{1-q},
		\end{equation}
		and an error of the $k$-th iterative approximation admits the estimate
		\begin{equation}\label{eq:est_k_it_aprox}
		\vertiii{\vy^{(\infty)} - \vy^{(k)}}\le \vertiii{\vg\left (\vp\right )} \frac{q^{k+1}}{1-q}.
		\end{equation}
	\end{theorem}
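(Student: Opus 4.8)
The plan is to read the recurrence \eqref{eq:kapprox} as the Picard iteration of the map $F(\vy) = \vg(\vy) + \vp$ on the space $\left(X^n, \vertiii{\cdot}\right)$ and to deduce everything from the Banach fixed point theorem. Since $\vertiii{\cdot}$ is the maximum of the componentwise $X$-norms, $\left(X^n, \vertiii{\cdot}\right)$ is complete whenever $X$ is, so it only remains to verify that $F$ is a contraction with the constant $q$ of \eqref{UmZb}. Observe also that a fixed point of $F$ is, by construction, exactly a solution of the discrete system \eqref{eq:CP-NN-disc_sys} (equivalently \eqref{eq:CP-NN-ham_eq_vec_it}); hence producing a unique fixed point is all that the existence claim requires.

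First I would derive the contraction estimate. For $\vx,\vw \in X^n$ the $i$-th component of $\vg(\vx) - \vg(\vw)$ equals $T_N(A,t_i)\left[g\left(K_{2n-1}(\cdot,\vx)\right) - g\left(K_{2n-1}(\cdot,\vw)\right)\right]$. Writing $T_N(A,t_i) = \left(T_N(A,t_i)A^{-\alpha}\right)A^{\alpha}$ and applying \eqref{eq:approx_est} and then the Lipschitz-type hypothesis \eqref{eq:CP-NN-lip_cond} yields, uniformly in $i$,
\begin{equation*}
\left\|\left(\vg(\vx) - \vg(\vw)\right)_i\right\| \le \frac{c}{\alpha}\left\|A^{\alpha}\left(g\left(K_{2n-1}(\cdot,\vx)\right) - g\left(K_{2n-1}(\cdot,\vw)\right)\right)\right\| \le \frac{cL}{\alpha}\max_{t \in [-1,1]}\left\|K_{2n-1}(t,\vx) - K_{2n-1}(t,\vw)\right\|.
\end{equation*}
By linearity of the interpolation polynomial in its nodal data, $K_{2n-1}(t,\vx) - K_{2n-1}(t,\vw) = \sum_{i=0}^{n}B_{i,2n-1}(t)(x_i - w_i)$, so the right-hand side is controlled by the Lebesgue constant of the scheme:
\begin{equation*}
\max_{t \in [-1,1]}\left\|K_{2n-1}(t,\vx) - K_{2n-1}(t,\vw)\right\| \le \left(\max_{t \in [-1,1]}\sum_{i=0}^{n}\left|B_{i,2n-1}(t)\right|\right)\vertiii{\vx - \vw}.
\end{equation*}

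The main obstacle is the uniform (in $n$) bound $\max_t \sum_{i=0}^n |B_{i,2n-1}(t)| \le 3$, which is precisely what injects the factor $3$ into $q$. The two boundary fundamental functions are harmless: being nonnegative with $B_{0,2n-1}(t) + B_{n,2n-1}(t) = P_{n-1}^2(t) \le 1$, they contribute at most $1$ to the sum. The genuinely delicate part is the interior sum $\sum_{i=1}^{n-1}|B_{i,2n-1}(t)|$, which must be bounded uniformly in $t$ and $n$ from the explicit form of the $B_{i,2n-1}$, the bound $|P_{n-1}| \le 1$, and Fej\'er-type manipulations for Chebyshev polynomials (cf. \cite{Smith2006}); here the sign-indefinite factor $1 + t t_i - 2 t_i^2$ rules out the simple partition-of-unity argument available for classical Hermite--Fej\'er interpolation. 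Granting the bound $\le 3$, I obtain $\vertiii{\vg(\vx) - \vg(\vw)} \le \frac{3cL}{\alpha}\vertiii{\vx - \vw} = q\,\vertiii{\vx - \vw}$, a contraction since $q<1$ by \eqref{UmZb}.

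Finally I would read off the quantitative conclusions from the contraction in the standard way. The Banach fixed point theorem gives the unique fixed point $\vy^{(\infty)} = \lim_{k\to\infty}\vy^{(k)}$. Since $\vy^{(0)} = \vp$ gives $\vy^{(1)} - \vy^{(0)} = \vg(\vp)$, iterating the contraction estimate yields $\vertiii{\vy^{(k+1)} - \vy^{(k)}} \le q^{k}\vertiii{\vg(\vp)}$; telescoping $\vy^{(\infty)} = \vy^{(0)} + \sum_{k\ge 0}\left(\vy^{(k+1)} - \vy^{(k)}\right)$ and summing the geometric series then gives \eqref{eq:est_it_sol}, while summing the same series from the index $k$ onward gives the a priori error bound \eqref{eq:est_k_it_aprox}. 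Apart from the Lebesgue-constant estimate, every step here is routine.
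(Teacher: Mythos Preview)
Your proposal is correct and follows essentially the same route as the paper: Banach fixed point on $(X^n,\vertiii{\cdot})$, the contraction estimate via \eqref{eq:approx_est}, the Lipschitz hypothesis \eqref{eq:CP-NN-lip_cond}, and the Lebesgue-constant bound $\max_t\sum_i|B_{i,2n-1}(t)|<3$ from \cite{Smith2006}, then the standard telescoping/geometric-series argument for \eqref{eq:est_it_sol} and \eqref{eq:est_k_it_aprox}. The only difference is cosmetic---you sketch why the Lebesgue-constant bound is plausible before invoking \cite{Smith2006}, whereas the paper simply quotes it.
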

	\begin{proof}
		To show that $y^{(\infty)}$ is a unique solution of  \eqref{eq:CP-NN-ham_eq_vec_it} we apply the Banach fixed point theorem. The space $X^n$ equipped with the metric $d(x,y) = \vertiii{\vx - \vy}$ forms a complete Banach space. 
		The mapping $\mathcal{F}$ defined by \eqref{eq:kapprox} transforms the space $X^n$ into itself. To demonstrate existence of the fixed point it remains to show that this mapping is contractive.
		\begin{align*}
		\vertiii{\mathcal{F}\vx - \mathcal{F}\vy} =&\vertiii{\vg\left (\vx\right ) - \vg\left (\vy\right ) }\\
		\leq& \max_{0\le j \le n}\left\| T_N(A,t_j)A^{-\alpha}\right\| \left \|A^{\alpha}\left ( g\left (K_{2n-1}\left (\cdot, \vx\right )\right ) - g\left (K_{2n-1}\left (\cdot, \vy\right )\right )\right )  \right\| \\
		\leq & \frac{c}{\alpha} \left \|A^{\alpha}\left ( g\left (K_{2n-1}\left (\cdot, \vx\right )\right ) - g\left (K_{2n-1}\left (\cdot, \vy\right )\right )\right )  \right\|
		\end{align*}
		We used \eqref{eq:approx_est}, to get the last estimate. For any $\vx \in X^n$ the polynomial $K_{2n-1}\left (t, \vx \right )$ is, obviously, a continuous function of $t$. Moreover, it was proved in \cite{Smith2006}, that 
		\[
		\max\limits_{t \in [-1,1]}\sum_{i=0}^{n}\left |B_{i,2n-1}(t)\right | < 3, \quad n=0,1,2,\ldots 
		\]
		These two observations together permit us to write
		\begin{equation}\label{eq:CP-NN_discsys_contraction}
		\vertiii{\mathcal{F}\vx - \mathcal{F}\vy} \leq  \frac{Lc}{\alpha}\max\limits_{t \in [-1,1]}\sum_{i=0}^{n}\left |B_{i,2n-1}(t)\right | \left \|x_i - y_i \right \| \leq \frac{3Lc}{\alpha}\vertiii{\vx - \vy}.
		\end{equation}
		The contraction of $\mathcal{F}$ is proved. 
		
		Since $\vy^{(k)}=\sum_{l=1}^{k}(\vy^{(l)}-\vy^{(l-1)}) + \vp$, we are actually interested in the difference of two consecutive elements of the sequence generated by \eqref{eq:kapprox}.  
		The estimate for $\vertiii{\vy^{(k)}-\vy^{(k-1)}}$ is provided by \eqref{eq:CP-NN_discsys_contraction}, specifically
		\begin{equation}\label{eq:it_est_dif}
		\vertiii{\vy^{(k)}-\vy^{(k-1)}} =\vertiii{\vg\left (\vy^{(k-1)}\right ) - \vg\left (\vy^{(k-2)}\right ) }
		\leq  q \vertiii{\vy^{(k-1)} - \vy^{(k-2)}}.
		\end{equation}
		So, in the end, it all goes down to 
		\[
		\vertiii{\vy^{(1)}-\vy^{(0)}} = \vertiii{\vg\left (\vp\right )} 
		\] 
		To justify \eqref{eq:est_k_it_aprox} we apply \eqref{eq:it_est_dif} to every term in the series for $\vy^{(k)}$. 
		\begin{equation}\label{eq:est2}
		\vertiii{y^{(k)}}\le \vertiii{\vp} + \sum_{p=0}^{k}\left( \frac{3Lc}{\alpha}\right) ^p \vertiii{\vg\left (\vp\right )}  = \vertiii{\vp} + \vertiii{\vg\left (\vp\right )}\frac{1-q^{k+1}}{1-q}.
		\end{equation}
		Inequality \eqref{eq:est_it_sol} can be derived analogously to \eqref{eq:est2}.
	\end{proof}

	\section{Error analysis}\label{sec:CP-NN-final_error}
	The developed method emerges as a combination of three different numerical procedures: 
	spectral approximation of the operator exponential $T_N(A,t)$, 
	collocation--based discretization and iterative solution of the discretized problem.
	In this section we derive the compound error estimate by analysing the error contribution from the every listed numerical procedure. 
	
	Let $z_i=u(t_i)-y_i$, $i=\overline{0,n}$ is the pointwise difference of solutions to \eqref{eq:CP-NN-ham_eq} and \eqref{eq:CP-NN-ham_eq_vec_it}. Denote $\vu = \left (u(t_0), \ldots, u(t_n)\right )^{T}$, $\vz = \left (z_0, \ldots, z_n\right )^{T}$. 
	Then, evaluate the quantity $z_i$ using \eqref{eq:CP-NN-ham_eq} and \eqref{eq:CP-NN-disc_sys}
	\begin{align*}
	z_i =& T(A,t_i)u_0 - T_N(A,t_i)u_0 + T(A,t_i)g\left (u(\cdot)\right ) - T_N(A,t_i)g\left (K_{2n-1} \left (\cdot, \vy\right )\right )\\
	=& T(A,t_i)u_0 - T_N(A,t_i)u_0 + T(A,t_i)g\left (u(\cdot)\right ) -  T(A,t_i)g\left (K_{2n-1} \left (\cdot, \vy\right )\right ) \\
	&+T(A,t_i)g\left (K_{2n-1} \left (\cdot, \vy\right )\right ) - T_N(A,t_i)g\left (K_{2n-1} \left (\cdot, \vy\right )\right )\\
	=& \eta_N(t_i)u_0 + \eta_N(t_i)g\left (K_{2n-1} \left (\cdot, \vy\right )\right )  + T(A,t_i)\left [g\left (u(\cdot)\right ) - g\left (K_{2n-1} \left (\cdot, \vy\right )\right )\right ]
	\end{align*}
	The terms $\eta_N(t_i)u_0$, $\eta_N(t_i)g\left (K_{2n-1} \left (\cdot, \vy\right )\right )$ can be estimated by \eqref{PohEks}, provided that the conditions of Theorem \ref{thm:ope_exp_est} are fulfilled. For convenience we denote
	\begin{align*}
	\mu_{n} &= T(A,t_i)\left [g\left (u(\cdot)\right ) - g\left (K_{2n-1} \left (\cdot, \vy\right )\right )\right ],
	\end{align*}
	and then estimate the third term 
	\[
	\begin{split}
	\vertiii{\mu_{n} } =&\max_{0\le i\le n}\left\| T_N(A,t_i)\left [g\left (u(\cdot)\right ) - g\left (K_{2n-1} \left (\cdot, \vy\right )\right )\right ] \right\| \\
	\leq& \max_{0\le i \le n}\left\| T_N(A,t_i)A^{-\alpha}\right\| 
	\left\| 
	A^{\alpha} \left[ 
	g\left( u(\cdot )\right) - g\left( K_{2n-1}\left(\cdot, \vy\right) \right) 
	\right] 
	\right\| \\
	\le& \frac{Lc}{\alpha} \max\limits_{t \in [-1, 1]} \left \|u(\cdot) - K_{2n-1} \left (\cdot, \vy\right )  \right\| \\
	\le& \frac{Lc}{\alpha} \left ( \Lambda_{2n-1}(u) + 3 \vertiii{\vec{z}}\right ) 
	\end{split}
	\] 
	where $\Lambda_{2n-1}(u)$ is an error of the approximation  of $u$ by the modified Hermit-Fej\'er polynomial of  degree $2n-1$. It is well know that, unlike Lagrange interpolation, the Hermite-Fej\'er interpolation is convergent 
	for any $u \in C\left ([-1,1]; X\right )$ \cite{Smith2006}. 
	The resulting estimate of $\vertiii{z}$ is given by the lemma below. 
	
	\begin{lemma}
		Assume that  the conditions of Theorem \ref{thm:CP-NN-sol_ex_ham_disc} is met and, in addition, that there exist $\alpha>0$ such that $u_0, g\left (K_{2n-1} \left (\cdot, \vy\right )\right ) \in D(A^\alpha)$, $\forall \vy \in X^n$.  
		Then the solution to discretized system \eqref{eq:CP-NN-disc_sys} gives a pointwise approximation of the solution to Hammerstein equation \eqref{eq:CP-NN-ham_eq}. The approximation error satisfies the following estimate 
		\begin{equation}\label{eq:CP-NN-disc_sys_err_est}
		\begin{aligned}
		\vertiii{ z} \leq& \frac{1}{1-q}\left (\frac{c\, \ee^{-\sqrt{\pi d \alpha(N+1)}}}{\alpha} \left( \|A^{\alpha}u_0\| +\left \|A^{\alpha} g\left (K_{2n-1} \left (\cdot, \vy\right )\right ) \right\|\right) \right. \\
		&\hspace*{17em}+ \left. \vphantom{\frac{c\, \ee^{-\sqrt{\pi d \alpha(N+1)}}}{\alpha}}
		    \frac{Lc}{\alpha} \Lambda_{2n-1}(u)\right ).
		\end{aligned}
		\end{equation}
	\end{lemma}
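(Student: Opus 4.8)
The plan is to assemble the three-term decomposition of $z_i$ derived just above the lemma into a single bound for $\vertiii{\vz}$, and then resolve the resulting self-referential inequality for $\vertiii{\vz}$. The decomposition already reads
\[
z_i = \eta_N(t_i)u_0 + \eta_N(t_i)g\left(K_{2n-1}\left(\cdot,\vy\right)\right) + \mu_n,
\]
so taking $\max_{0\le i \le n}\|\cdot\|$ and applying the triangle inequality splits $\vertiii{\vz}$ into the sum of the contributions from the two $\eta_N$ terms and the $\mu_n$ term.

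First I would bound the two $\eta_N$ contributions by \eqref{PohEks}. This is exactly where the extra hypothesis of the lemma is consumed: the requirement $u_0, g\left(K_{2n-1}\left(\cdot,\vy\right)\right) \in D(A^\alpha)$ is precisely what makes Theorem \ref{thm:ope_exp_est} applicable to these two vectors, yielding the common exponential prefactor $\frac{c\,\ee^{-\sqrt{\pi d \alpha(N+1)}}}{\alpha}$ multiplying $\|A^{\alpha}u_0\|$ and $\left\|A^{\alpha} g\left(K_{2n-1}\left(\cdot,\vy\right)\right)\right\|$ respectively. Second, I would substitute the estimate for $\vertiii{\mu_n}$ already obtained in the discussion preceding the lemma, namely $\vertiii{\mu_n} \le \frac{Lc}{\alpha}\left(\Lambda_{2n-1}(u) + 3\vertiii{\vz}\right)$, which contributes both the interpolation term $\frac{Lc}{\alpha}\Lambda_{2n-1}(u)$ and, critically, the term $\frac{3Lc}{\alpha}\vertiii{\vz}$.

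The decisive step is the absorption of this last term. The point is that $\frac{3Lc}{\alpha}$ is exactly the contraction constant $q$ from \eqref{UmZb}, so the collected inequality has the shape $\vertiii{\vz} \le R + q\vertiii{\vz}$, where $R$ gathers the exponential and interpolation terms on the right of \eqref{eq:CP-NN-disc_sys_err_est}. Transposing $q\vertiii{\vz}$ to the left gives $(1-q)\vertiii{\vz} \le R$, and condition \eqref{UmZb} guarantees $1-q>0$, so dividing through produces $\vertiii{\vz}\le R/(1-q)$, which is precisely \eqref{eq:CP-NN-disc_sys_err_est}. I expect no genuine obstacle here: the calculation is short and mechanical once the decomposition and the $\mu_n$ bound are in hand. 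The only point requiring care — really the conceptual crux rather than a difficulty — is the recognition that the same contraction constant that secured existence and uniqueness in Theorem \ref{thm:CP-NN-sol_ex_ham_disc} reappears to close the a posteriori error estimate, so that the hypothesis $q<1$ must be invoked a second time to justify the final division.
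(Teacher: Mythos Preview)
Your proposal is correct and follows essentially the same line as the paper's own proof, which merely says that ``most of the proof was presented before the lemma'' and instructs the reader to combine the $\vertiii{\mu_n}$ bound with Theorem~\ref{thm:ope_exp_est} applied to $\eta_N(t_i)u_0$ and $\eta_N(t_i)g\left(K_{2n-1}(\cdot,\vy)\right)$. Your explicit identification of the absorption step $(1-q)\vertiii{\vz}\le R$ via \eqref{UmZb} is exactly the mechanism the paper leaves implicit in its one-line proof.
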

	\begin{proof}
		Most of the proof was presented before the lemma. To prove the final error estimate we combine the estimate for $\vertiii{\mu_{n} }$ with the results of Theorem \ref{thm:ope_exp_est} for  $\eta_N(t_i)u_0$, $\eta_N(t_i)g\left (K_{2n-1} \left (\cdot, \vy\right )\right )$, $i=\overline{0,n}$. 
	\end{proof}
	
	The compound error estimate of the method is given by the following theorem.
	\begin{theorem}\label{thm:CP-NN-final_err_est}
		Assume that the operator $A$ and function $g$ satisfy the conditions of Theorem \ref{thm:CP-NN-sol_ex_ham_disc} and, in addition, that 
		$u_0, g\left (K_{2n-1} \left (\cdot, \vy\right )\right ) \in D(A^\alpha)$, $\forall \vy \in X^n$. 
		Then the $k$-th iterative approximation given by \eqref{eq:kapprox}  
		constitutes a pointwise approximation of the solution to Hammerstein equation \eqref{eq:CP-NN-ham_eq}. The  error of this approximation satisfies the inequality 
		\begin{equation}\label{eq:CP-NN-final_est}
		\vertiii{\vu -\vy^{(k)}} \leq \vertiii{\vz} + c_1 M_C\frac{q^{k+1}}{1-q} \vertiii{\vp} ,
		\end{equation}
		where $M_C=\sup\limits_{v \in C\left ([-1,1]; X\right )}\frac{\|g(v)\|}{\|v\|}$, and $c_1>0$ is independent of $g$.
		
	\end{theorem}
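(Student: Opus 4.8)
The plan is to introduce the exact solution $\vy^{(\infty)}$ of the discretized fixed-point equation \eqref{eq:CP-NN-ham_eq_vec_it} as an intermediate object and to split the total error by the triangle inequality,
\[
\vertiii{\vu - \vy^{(k)}} \le \vertiii{\vu - \vy^{(\infty)}} + \vertiii{\vy^{(\infty)} - \vy^{(k)}}.
\]
The two summands on the right correspond precisely to the two outstanding sources of error: the first measures the gap between the collocation solution $\vy^{(\infty)}$ and the mild solution of \eqref{eq:CP-NN-ham_eq} sampled at the nodes $t_i$, while the second measures the gap between that exact discrete solution and its $k$-th fixed-point iterate produced by \eqref{eq:kapprox}.

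The first summand requires no new work. By the convention $\vz = \vu - \vy^{(\infty)}$ fixed at the start of the error analysis, one has $\vertiii{\vu - \vy^{(\infty)}} = \vertiii{\vz}$, which is exactly the quantity already controlled by the preceding lemma through \eqref{eq:CP-NN-disc_sys_err_est}; this yields the $\vertiii{\vz}$ term in \eqref{eq:CP-NN-final_est}. For the second summand I would invoke the iteration estimate \eqref{eq:est_k_it_aprox} of Theorem \ref{thm:CP-NN-sol_ex_ham_disc}, which gives at once
\[
\vertiii{\vy^{(\infty)} - \vy^{(k)}} \le \vertiii{\vg\left (\vp\right )}\,\frac{q^{k+1}}{1-q}.
\]
It therefore only remains to recast the factor $\vertiii{\vg(\vp)}$ in the form $c_1 M_C \vertiii{\vp}$ asserted in the statement.

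The hard part is this last reduction, because the resulting constant $c_1$ must be independent of $g$. Writing $\vertiii{\vg(\vp)} = \max_{0\le j \le n}\left\| T_N(A,t_j)\, g\left (K_{2n-1}(\cdot,\vp)\right )\right\|$, I would first use the boundedness of $g$ encoded in $M_C$ to obtain $\left\| g\left (K_{2n-1}(\cdot,\vp)\right )\right\| \le M_C \max_{t\in[-1,1]}\left\| K_{2n-1}(t,\vp)\right\|$, and then the Hermite--Fej\'er bound $\max_{t}\sum_{i=0}^{n}|B_{i,2n-1}(t)| < 3$ of \cite{Smith2006} to get $\max_{t\in[-1,1]}\left\| K_{2n-1}(t,\vp)\right\| < 3\,\vertiii{\vp}$. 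Finally, since each $T_N(A,t_j)$ is a finite linear combination of the bounded corrected resolvents $R_{A,1}(z(ph))$, its operator norm $\left\| T_N(A,t_j)\right\|$ is finite; hence setting $c_1 = 3\max_{0\le j \le n}\left\| T_N(A,t_j)\right\|$ gives $\vertiii{\vg(\vp)} \le c_1 M_C \vertiii{\vp}$ with $c_1$ depending only on $A$, $N$, $n$ and the nodes, but not on $g$. Substituting this bound into the two-term inequality above produces exactly \eqref{eq:CP-NN-final_est}. The one subtlety to verify is that here I bound $T_N(A,t_j)$ directly in operator norm rather than through the $A^{-\alpha}$-weighted estimate \eqref{eq:approx_est} used earlier; this is legitimate precisely because $g$ is now controlled through the growth bound $M_C$ instead of the Lipschitz condition \eqref{eq:CP-NN-lip_cond}.
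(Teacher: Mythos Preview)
The paper states this theorem without proof; it is evidently meant to follow by combining the preceding lemma (which bounds $\vertiii{\vz}$) with the iteration estimate \eqref{eq:est_k_it_aprox} of Theorem~\ref{thm:CP-NN-sol_ex_ham_disc} via the triangle inequality $\vertiii{\vu-\vy^{(k)}}\le\vertiii{\vu-\vy^{(\infty)}}+\vertiii{\vy^{(\infty)}-\vy^{(k)}}$. Your argument is precisely this intended combination, and your additional step---bounding $\vertiii{\vg(\vp)}$ by $3\max_j\|T_N(A,t_j)\|\,M_C\,\vertiii{\vp}$ using the definition of $M_C$ and the Hermite--Fej\'er Lebesgue constant---correctly supplies the only detail the paper leaves implicit, yielding a $c_1$ independent of $g$ as required.
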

	\begin{remark}
		Inequality  \eqref{eq:CP-NN-final_est} along with the known estimates of $\Lambda_{2n-1}(u)$ \cite{err_est_review_Goodenough1986} demonstrate that the order of the error of Hermite-Fej\'er interpolation is lower, then other contributions to the compound error estimate. To improve the overall convergence one might use other interpolation technique to collocate \eqref{eq:CP-NN-ham_eq_approx}, like spline interpolation of odd degree \cite{MakarovKhlobystov1983}, for instance. The compound numerical method could adapt any other interpolation operator with the bounded norm, as long as its image is in $C\left ([-1,1]; X\right )$.
	\end{remark}	
	 In spite of the fact that $\Lambda_{2n-1}(u) \sim \ln{n}/n$ as  $n\rightarrow \infty$ it does not present a real challenge from the computational point of view, since the computational complexity of the right-hand side evaluation from \eqref{eq:kapprox} grows very slowly when $n$ increases. It happens because of the mentioned in section \ref{sec:CP-NN-disctretization} computational properties of the operator exponential approximation.  
	\begin{example}
		Here we experimentally consider the specification of problem \eqref{eq:CP-NN}, with $A$ being a second-order elliptic operator:
		\[
		Au = -\frac{\partial^2 u}{\partial x^2}, \quad u(0) = u(1) = 0,
		\]
		and such, that the nonlocal condition is defined by
		\begin{equation}\label{cde1-1-3}
		\begin{split}
		&u(-1) - \mu \int_{-1}^{1}u^2(s)ds=u_0,\\
		&u_0(x) = \sin \left( \pi \,x \right) +\mu \frac{e^{-4\,{\pi }^{2}}-1}{2\pi^2} 
		\sin^2 \left( \pi \,x \right).
		\end{split}
		\end{equation}
		\begin{figure}
			\centering
			\includegraphics[width=0.95\linewidth]{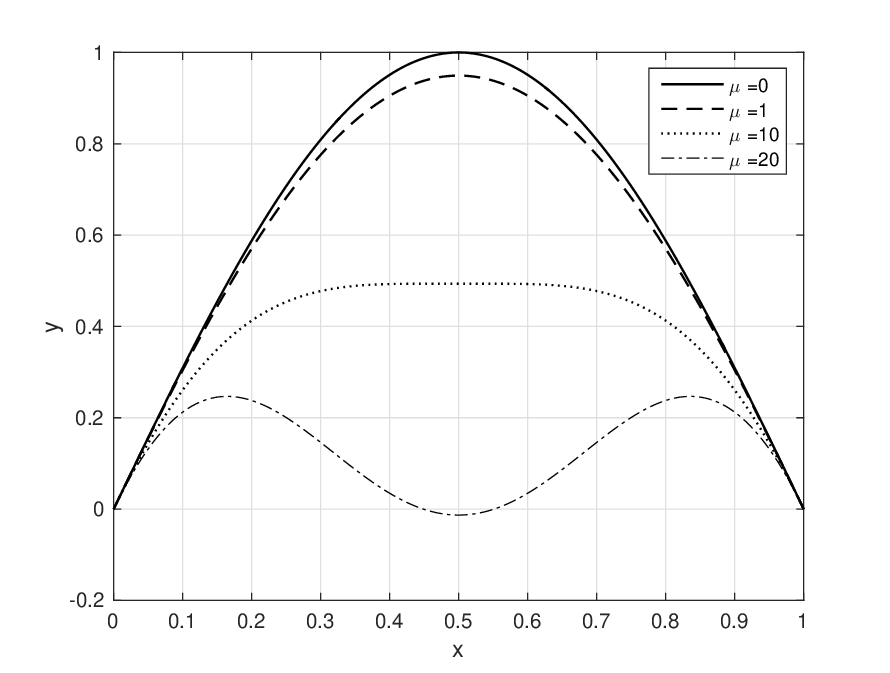}
			\caption{Graph of $u_0(x)$ for the different values of $\mu$.}
			\label{fig:u_0}
		\end{figure}
		To benchmark the developed numerical method for the different values of $q$ \eqref{UmZb} we have set $g\left (u(\cdot)\right ) = \mu \int_{-1}^{1}u^2(s)ds$. This expression contains one additional parameter $\mu$, which enable us to change the size of $q$ through $L$. 
		Exact solution $u_{ex}$ to the considered problem can be written as 
		\begin{equation}
		u_{ex}\left (t,x \right ) = {{e}^{- \left( t+1 \right) {\pi }^{2}}}\sin \left( \pi \,x
		\right). 
		\end{equation} 	
		Note, that $u_{ex}\left (t,x \right )$ does not depend on $\mu$ and is equal to $u_0(x)$, when $t=-1$, $\mu=0$.
		One can observe from the graph of $u_0$, depicted in Fig. \ref{fig:u_0}, that the solution of nonlocal problem is close to the solution of the corresponding classical Cauchy problem (with the initial condition $u(-1)=u_0$) the difference between these two solutions grows if $\mu$ gets bigger (compare the graphs of $u_0(x)$ for $\mu=0$ and $\mu=20$ from Fig. \ref{fig:u_0}).
		
		To measure an experimental accuracy of the method we define 
		\[
		Err=\mathrm{Err}\left (u_{ex},\vy^{(k)}\right ) \equiv \max_{0\le l \le m}\max_{0\le j \le n}\left\| u_{ex}(t_i, x_l) - y^{(k)}_{j}(x_l) \right\| , 
		\]
		where $x_l =\frac{1}{2}\left( 1-\cos\left( \frac{\pi l}{m}\right)\right )$, $l=\overline{0,m}$ is the scaled version of CGL nodes. After this is done, one needs to define the stopping criteria for iterative method \eqref{eq:kapprox}. 
		For that matter we shall use 
		\[
		\mathrm{Err}\left (\vy^{(k)},\vy^{(k-1)}\right ) < 10^{-18},
		\] 
		to factor out the influence of the iterative error.

		Initially, we set $\mu = 1/4$. Such choice of $\mu$ guaranties the validity of \eqref{UmZb}, for $\forall \rho_0 \in \R_+$, $\alpha =1$. 
		Moreover, function $g(\cdot)$ admits, as a function of scalar variable, analytic extension into the region $\cE_\rho \in \oC$. It means that all the suppositions of Theorems \ref{thm:CP-NN-sol_ex_ham_disc} and \ref{thm:CP-NN-final_err_est} are met.  Consequently the iterative solution of \eqref{eq:CP-NN-ham_eq_vec_it} exists and can be approximated by $\vy^{(k)}$.  This solution converges to the initial problem's solution $u(t,x)$ ($n,N \rightarrow \infty$).  
		The experimental results, calculated\footnote{The code to reproduce the experimental results is available at \url{https://www.imath.kiev.ua/~sytnik/research/works/nonlinear-nonlocal-2016.html}} by \eqref{eq:kapprox}, \eqref{eq:op_exp_approx}, a presented in Table \ref{tab1} for the different values of $N$, $n$.
		For each $N \in \{4,8,16,32,64,128,256\}$, we experimentally selected $n$ sufficiently large for the error $\mathrm{Err}(n)$ to saturate (see Fig. \ref{fig:err_vs_n} (a)). 
		\begin{figure}[ptbh]
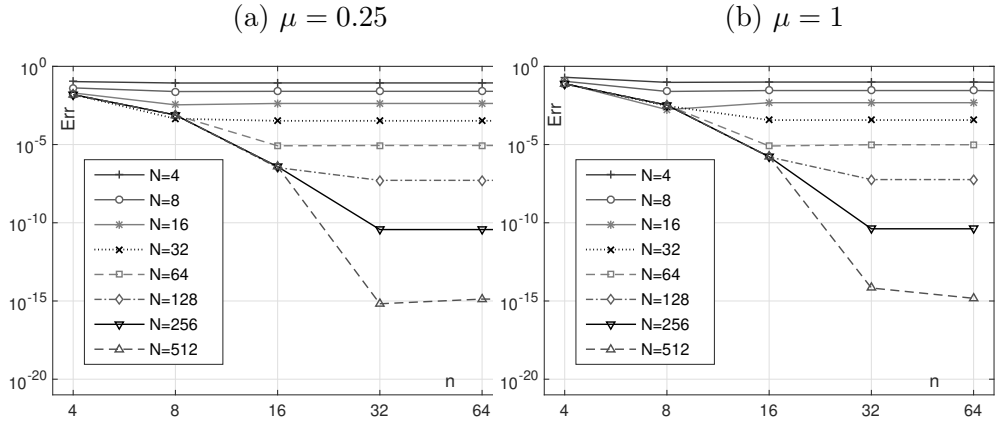

		\centering
		\newdimen\lentwosubfig
		\lentwosubfig=0.51\linewidth
		\begin{overpic}[width=\lentwosubfig, viewport=20 20 465 380, clip=true]%
		{err_vs_n_for_N_4_512_n_4_64_mu_0_25}
		\put(42,74){\colorbox{white}{\small (a) $\mu=0.25$} }
		\end{overpic}
			\hspace{-8mm}
		\begin{overpic}[width=\lentwosubfig, viewport=20 20 465 380, clip=true]%
		{err_vs_n_for_N_4_512_n_4_64_mu_1}
		\put(42,74){\colorbox{white}{\small (b) $\mu=1$} }
		\end{overpic}
			\caption{Graph of experimental error Err as a function of the number of collocation points $n$, drawn in the logarithmic scale for $N=4,8,16,32,64,128,256,512$. }
			\label{fig:err_vs_n}
		\end{figure}
		Those two are shown in Table \ref{tab1} alongside the value of Err and the number of iterations (denoted by $K$) needed to achieve the iteration's relative accuracy $10^{-5}$, estimated a posteriori.
		As observed from Table \ref{tab1}, the experimental converge of the developed method is exponential with respect to $n$. 
		
		\begin{table}[ptbh]
			\begin{center}
				\begin{tabular}{|c|c|c|c|}
					\hline
					$N$                      & $n$  &                 Err                  & $K$  \\
					\hline
					4                      & 8  &        0.0859119243400000010         & 3  \\
					8                      & 8  &        0.0244950525900000000         & 3  \\
					16                      & 8  &        0.00345794666699999987        & 3  \\
					32                      & 16 &       0.000328787487900000005        & 3  \\
					64                      & 16 &      0.00000833843948899999922       & 3  \\
					128                     & 32 &     0.0000000515513076299999962      & 4  \\
					256                     & 32 & $3.68083566999999982\times 10^{-11}$ & 4 \\
					512                     & 64 & $1.32334447899999999\times 10^{-15}$ & 4 \\
					\hline
				\end{tabular}
			\end{center}
			\caption{Result of experimental application of the developed method to the numerical solution of \eqref{eq:CP-NN}}\label{tab1}.
		\end{table}
		The preposition of Theorem \ref{thm:CP-NN-sol_ex_ham_disc} is no longer true for $\mu = 1$. In spite of that the method is still convergent. Several graphs of $Err$ as a function of $n$ are depicted in Fig. \ref{fig:err_vs_n} (b). They demonstrate a qualitatively similar exponentially decreasing behaviour of the experimental error.
		
	\end{example}
	
	\section{Conclusions and future work}\label{sec:CC-NN-conclusions}
	In this work we developed and justified the method for the numerical solution of abstract nonlocal Cauchy problem \eqref{eq:CP-NN}. If the operator function $g\left (u(\cdot)\right )$ is continuous and $A$ is sectorial with the angle $\phi_0 < \frac{\pi}{2}$ is globally convergent. 
	Already being general, problem \eqref{eq:CP-NN} can be generalized even more by adding the nonlinear right-hand side. Existing theoretical results  for such problems \cite{AizicoviciLee2005,FanDongLi2006} suggest that  the extension of  developed methodology to that class of problems is possible. This consists the topic for the future work.
	
	\bibliographystyle{siam}
	\bibliography{bib-GMV,nonlocal_in_time}

\end{document}